\documentclass{article}
\usepackage{amsmath,amssymb,amsfonts}
\usepackage{amsthm}
\usepackage{geometry}
\usepackage[hidelinks]{hyperref}

\setcounter{MaxMatrixCols}{10}

\newtheorem*{abstract*}{Abstract}
\newtheorem{theorem}{Theorem}[section]
\newtheorem{corollary}[theorem]{Corollary}
\newtheorem{lemma}[theorem]{Lemma}
\newtheorem{remark}[theorem]{Remark}

\linespread{2}
\setlength{\parindent}{0pt}
\setlength{\parskip}{2ex}

\newcommand{\be}{\begin{equation}}
\newcommand{\ee}{\end{equation}}
\newcommand{\bd}{\begin{displaymath}}
\newcommand{\ed}{\end{displaymath}}
\everymath{\displaystyle}
\input{tcilatex}
\begin{document}

\title{{\huge \textbf{Non-spurious solutions to second order BVP by
monotonicity methods}}}
\author{Filip Pietrusiak}
\date{}
\maketitle
\begin{abstract*}
\normalfont We consider the following BVP $\ddot{x}\left( t\right) =f\left(
t,\dot{x}\left( t\right) ,x\left( t\right) \right) -h\left( t\right) $, $%
x\left( 0\right) =x\left( 1\right) =0$, where $f$ is continuous and
satisfies some other conditions, $h\in H_{0}^{1}\left( 0,1\right) $ together
with its discretization 
\begin{equation*}
-\Delta ^{2}x(k-1)+\frac{1}{n^{2}}f\left( \frac{k}{n},n\Delta x\left(
k-1\right) ,x\left( k\right) \right) =\frac{1}{n^{2}}h\left( \frac{k}{n}%
\right) \text{ for }k\in \left\{ 1, 2, \ldots ,n\right\} \text{.}
\end{equation*}%
Using monotonicity methods we obtain the convergence of a solutions to a
family of discrete problems to the solution of a continuous one, i.e. the
existence of non-spurious solutions to the above problems is considered.
Continuous dependence on parameters for the continuous problem is also
investigated.\newline
\textbf{Keywords:} non-spurious solutions, monotonicity methods, continuous
dependence on parameters, boundary value problems.\newline
\textbf{Mathematics Subject Classification:} 39A12, 39A10, 30E25, 34B15.
\end{abstract*}
\section{Introduction}
In this note we consider non-spurious solutions by using a monotonicity
methods to the following second order BVP 
\begin{equation}
\left\{ 
\begin{array}{l}
\ddot{x}\left( t\right) =f\left( t,\dot{x}\left( t\right) ,x\left( t\right)
\right) -h\left( t\right) , \\ 
x\left( 0\right) =x\left( 1\right) =0,%
\end{array}%
\right.  \label{problem}
\end{equation}%
where $f:\left[ 0,1\right] \times \mathbb{R}^{2}\rightarrow \mathbb{R}$ is a
continuous function such that $f\left( t,0,0\right) =0$ and $h:\left[ 0,1%
\right] \rightarrow \mathbb{R}$ is a continuous function such that $h\left(
0\right) =h\left( 1\right) =0$. We will make precise the background further
on. The following assumptions will be used in this work 
\begin{eqnarray*}
&\text{\textit{\textbf{P1}}}&\forall _{r>0}\exists _{f_{r}\in L^{1}\left(
0,1\right) }\forall _{x\in H_{0}^{1}\left( 0,1\right) }\left\Vert
x\right\Vert \leq r\Rightarrow \left\vert f\left( t,\dot{x}\left( t\right)
,x\left( t\right) \right) \right\vert \leq f_{r}\left( t\right) \text{ a.e.
in }\left( 0,1\right)\text{,} \\
&\text{\textit{\textbf{P2}}}&\forall _{s,t,w,z\in \mathbb{R}\text{, }k,l\in
\lbrack 0,1]}\left( s-t\right) \left( f\left( k,w,s\right) -f\left(
l,z,t\right) \right) \geq 0\text{.}
\end{eqnarray*}%
Condition \textit{\textbf{P1}} is assumed in order to make sure that
suitable operator, which we will use, is well defined, while \textit{\textbf{%
P2}} is assumed in order to apply monotonicity methods.

Together with problem \eqref{problem} we consider its discretization defined
as follows. For fixed $n\in \mathbb{N}$ we consider the following
discretization from \cite{gaines} p. 411 
\begin{equation}
-\Delta ^{2}x(k-1)+\frac{1}{n^{2}}f\left( \frac{k}{n},n\Delta x\left(
k-1\right) ,x\left( k\right) \right) =\frac{1}{n^{2}}h\left( \frac{k}{n}%
\right) \text{ for }k\in \left\{ 1, 2, \ldots,n\right\} \text{,}  \label{dys}
\end{equation}%
where $x:[0,n]\cap \mathbb{N}_{0}\rightarrow \mathbb{R}$, $f$ and $h$ have
the same properties as above and $x\left( 0\right) =x\left( n\right) =0$.
Again solutions are understood in the weak sense which will be made precise
further; $\mathbb{N}_{0}:=\mathbb{N}\cup \left\{ 0\right\} .$ \newpage
Assume that both continuous boundary value problem \eqref{problem} and for
each fixed $n\in \mathbb{N}$ the discrete boundary value problem \eqref{dys}
are solvable by $x$ and $x^{n}=\left( x^{n}(k)\right) $, respectively.
Moreover, let there exist two positive constants $Q$, $N$ such that 
\begin{equation}
n|\Delta x^{n}(k-1)|\leq Q\text{ and }|x^{n}(k)|\leq N  \label{estimation}
\end{equation}%
for all $k=1,2,\ldots ,n$ and all $n\geq n_{0}$, where $n_{0}$ is fixed (and
possibly arbitrarily large). Lemma 2.4 from \cite{gaines} p. 414 says that
for some subsequence $\left( x^{n_{m}}\right) _{m\in \mathbb{N}}$ of $\left(
x^{n}\right) _{n\in \mathbb{N}}$ it holds 
\begin{equation}
\lim_{m\rightarrow \infty }\max_{0\leq k\leq n_{m}}\left\vert \bar{x}%
^{n_{m}}\left( k\right) -x\left( \frac{k}{n_{m}}\right) \right\vert =0\text{%
, }\lim_{m\rightarrow \infty }\max_{0\leq k\leq n_{m}}\left\vert \bar{v}%
^{n_{m}}\left( k\right) -\dot{x}\left( \frac{k}{n_{m}}\right) \right\vert =0%
\text{,}  \label{main}
\end{equation}%
where 
\begin{equation*}
\bar{x}^{n}\left( t\right) :=x^{n}\left( k\right) +n\Delta x^{n}\left(
k\right) \left( t-\frac{k}{n}\right) \text{, for }\frac{k}{n}\leq t<\frac{k+1%
}{n}\text{,}
\end{equation*}%
\begin{equation*}
\bar{v}^{n}\left( t\right) :=\left\{ 
\begin{array}{ll}
n\Delta x^{n}\left( k-1\right) +n^{2}\Delta ^{2}x^{n}\left( k-1\right)
\left( t-\frac{k}{n}\right) \text{, } & \frac{k}{n}\leq t<\frac{k+1}{n}\text{%
,} \\ 
n\Delta x^{n}\left( 0\right) \text{, } & 0\leq t<\frac{1}{n}\text{.}%
\end{array}%
\right.
\end{equation*}%
Note that if both continuous and discrete problem have unique solution then
the convergence holds for the whole sequence, see the last comments in paper 
\cite{Tisdell}. Now following comments in \cite{non-spurious}, we introduce
the idea of a non-spurious solution. The solutions of a family of problems %
\eqref{dys} which converge to some solution of problem \eqref{problem} in
the sense described by relation \eqref{main} are addressed as non-spurious
solutions.

There have been some research in the area of non-spurious solutions
addressing mainly problems whose solutions where obtained by the fixed point
theorems and the method of lower and upper solutions, \cite{rech1}, \cite%
{rachunkowa2}, \cite{Tisdell}. In \cite{non-spurious} the variational method
is applied, namely the direct method of the calculus of variations. In this
note we are aiming at using monotonicity method in order to show that in
this setting one can also obtain suitable convergence results. While the
approach is somewhat similar to this of \cite{non-spurious}, we see that $f$
in contrast to \cite{non-spurious} can be dependent on the derivative and as
an additional result we get continuous dependence on parameters which seems
to be of some novelty by monotonicity approach. As expected we will have to
get the uniqueness of solutions for the associated discrete problem, which
is not always easy to be obtained, see \cite{uni2}. Moreover, in our case
the first estimation in \eqref{estimation} does not follow from the second
one, as the case with $f$ not depending on $\Delta x$, so that it must be
derived from the conditions imposed on our problem. As appears with other
methods, monotonicity and boundedness of solutions are inherited in the
discrete problem from the continuous one. To prove the existence and
uniqueness of solution in \eqref{problem} and for fixed $n\in \mathbb{N}$ in %
\eqref{dys}, we need following Corollary 6.1.9 from \cite{Drabek} p. 370.
\begin{corollary}
\label{drab} Let $H$ be a real Hilbert space and $T:H\rightarrow H$ be a
continuous and strongly monotone operator, i.e. there exists some $c>0$ that%
\begin{equation}
\left\langle Tx-Ty,x-y\right\rangle \geq c \left\Vert x-y\right\Vert ^{2}
\label{definition-strongly_monotone}
\end{equation}%
for all $x,y\in H$. Then for any $h\in H$ the equation 
\begin{equation*}
T\left( u\right) =h
\end{equation*}%
has a unique solution. Let $T\left( u_{1}\right) =h_{1}$ and $T\left(
u_{2}\right) =h_{2}$. Then 
\begin{equation*}
\left\Vert u_{1}-u_{2}\right\Vert \leq \frac{1}{c}\left\Vert
h_{1}-h_{2}\right\Vert
\end{equation*}%
with $c>0$ defined in \eqref{definition-strongly_monotone}, i.e. $T^{-1}$ is
Lipschitz continuous.
\end{corollary}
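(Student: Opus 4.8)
The statement really packages two claims: that $T$ is onto, so that $T(u)=h$ is solvable, and that $T$ is injective with a $\tfrac1c$-Lipschitz inverse. The plan is to dispose of the second, quantitative part directly from \eqref{definition-strongly_monotone}, and to obtain surjectivity by the Galerkin / Browder--Minty scheme, in which strong monotonicity is precisely the hypothesis supplying the coercivity that drives the argument. Since the statement is quoted as Corollary 6.1.9 of \cite{Drabek}, in the body of the paper it may simply be invoked; what follows is how one argues it.

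For the Lipschitz bound, let $T(u_1)=h_1$ and $T(u_2)=h_2$. Inserting $x=u_1$, $y=u_2$ into \eqref{definition-strongly_monotone} and estimating the right-hand side by the Cauchy--Schwarz inequality,
\[
c\,\|u_1-u_2\|^2\le \langle Tu_1-Tu_2,\,u_1-u_2\rangle=\langle h_1-h_2,\,u_1-u_2\rangle\le \|h_1-h_2\|\,\|u_1-u_2\|.
\]
If $u_1=u_2$ the inequality $\|u_1-u_2\|\le\frac1c\|h_1-h_2\|$ is trivial; otherwise divide by $\|u_1-u_2\|>0$. The special case $h_1=h_2$ forces $u_1=u_2$, so a solution, once it exists, is unique.

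For existence, note first that \eqref{definition-strongly_monotone} with $y=0$ gives $\langle Tx,x\rangle\ge c\|x\|^2-\|T0\|\,\|x\|$, so $\langle Tx,x\rangle/\|x\|\to\infty$ as $\|x\|\to\infty$; replacing $T$ by $T-h$ (still continuous and strongly monotone with the same $c$, still coercive) we may assume $h=0$ and seek $u$ with $Tu=0$. Fix an increasing sequence of finite-dimensional subspaces $H_m\subset H$ with $\overline{\bigcup_m H_m}=H$, and let $P_m$ be the orthogonal projection onto $H_m$. On $H_m$ the map $u\mapsto P_mTu$ is continuous, and the coercivity estimate gives $\langle P_mTu,u\rangle=\langle Tu,u\rangle>0$ on the sphere $\|u\|=R$ once $R$ is large; the elementary consequence of Brouwer's fixed point theorem that a continuous self-map of a closed Euclidean ball which points outward on the boundary has a zero then yields $u_m\in H_m$ with $P_mTu_m=0$, while coercivity bounds $(u_m)$ uniformly. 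Passing to a weakly convergent subsequence $u_m\rightharpoonup u$, one identifies the limit by Minty's monotonicity trick: from $\langle Tu_m,v\rangle=0$ for $v\in H_m$ (in particular $\langle Tu_m,u_m\rangle=0$), boundedness of $(Tu_m)$, and density of $\bigcup_m H_m$, letting $m\to\infty$ in the monotonicity inequality $\langle Tu_m-T(u-tz),\,u_m-(u-tz)\rangle\ge0$, then dividing by $t$ and using continuity of $T$ as $t\to0^+$, yields $\langle Tu,z\rangle\ge0$ for every $z\in H$; replacing $z$ by $-z$ gives $Tu=0$.

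The Cauchy--Schwarz manipulations and the finite-dimensional existence on each $H_m$ are routine. The genuine obstacle is the passage to the weak limit in the Galerkin scheme — making Minty's trick work when $T$ is only assumed continuous — which is exactly the content of the theorem from which the corollary is deduced in \cite{Drabek}. Here strong monotonicity helps once more: as soon as $u$ is known to exist, the same estimate $c\|u_m-u\|^2\le\langle Tu_m-Tu,\,u_m-u\rangle\to0$ shows that the Galerkin approximations in fact converge strongly, so the identification $Tu=0$ can alternatively be read off by plain continuity.
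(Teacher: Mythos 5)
The paper gives no proof of this statement at all: it is imported verbatim as Corollary 6.1.9 of \cite{Drabek} and simply invoked, so there is no internal argument to compare yours against. What you supply is the standard proof of the Browder--Minty theorem specialised to a Hilbert space, plus the elementary Cauchy--Schwarz derivation of the bound $\left\Vert u_{1}-u_{2}\right\Vert \leq \frac{1}{c}\left\Vert h_{1}-h_{2}\right\Vert$; that quantitative half is exactly right, and it is the only part of the corollary the paper actually uses beyond bare solvability. Your Galerkin/Minty existence argument is also correct in outline, but one step is asserted rather than justified: the boundedness of the sequence $\left( Tu_{m}\right) $. Since $T$ is only assumed continuous, and a continuous map on an infinite-dimensional Hilbert space need not carry bounded sets to bounded sets, this does not follow from $\left\Vert u_{m}\right\Vert \leq R$ alone; it is recovered from monotonicity, e.g.\ from $\left\langle Tu_{m},v\right\rangle \leq -\left\langle Tv,u_{m}-v\right\rangle $ (which uses $\left\langle Tu_{m},u_{m}\right\rangle =0$) applied to $v$ and $-v$, followed by the uniform boundedness principle, or equivalently from the local boundedness theorem for monotone operators. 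Your closing remark that strong monotonicity upgrades the Galerkin approximations to strong convergence is a nice observation, but it too needs $\left\langle Tu_{m},u\right\rangle \rightarrow 0$ and hence the same boundedness fact, so it does not bypass the issue. (A cosmetic point: the Brouwer consequence you invoke concerns a continuous vector field on a closed ball that points outward on the boundary, not a self-map of the ball.) In short, your reconstruction is sound modulo this standard but non-trivial lemma, and it is a faithful account of the proof the paper delegates entirely to \cite{Drabek}.
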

\section{Non-spurious solutions for (\protect\ref{problem})}
\subsection{The continuous problem}
Solutions of \eqref{problem} will be investigated in the real Hilbert space $%
H_{0}^{1}\left( 0,1\right) $ consisting of absolutely continuous functions
satisfying the boundary conditions, which have an a.e. derivative being
integrable with square. In the space $H_{0}^{1}\left( 0,1\right)$ we
introduce following norm 
\begin{equation*}
\left\Vert x\right\Vert :=\left( \int_{0}^{1}{\left( \dot{x}\left( t\right)
\right) ^{2}dt}\right) ^{\frac{1}{2}}
\end{equation*}%
and with a natural scalar product given by 
\begin{equation*}
\left\langle x,y\right\rangle := \int_{0}^{1}{\dot{x}\left( t\right) \dot{y}%
\left( t\right) dt}\text{.}
\end{equation*}%
Symbol $\left\Vert \cdot \right\Vert $ will always denote the norm in $%
H_{0}^{1}\left( 0,1\right) $, while for other norms we shall write
explicitly. Since we apply monotonicity methods, we look only for $%
H_{0}^{1}\left( 0,1\right) $ solutions which are called the weak solutions.
A function $x\in H_{0}^{1}\left( 0,1\right) $ is a weak $H_{0}^{1}\left(
0,1\right) $ solution to \eqref{problem}, if the following equality 
\begin{equation}
\int_{0}^{1}{\dot{x}\left( t\right) \dot{y}\left( t\right) dt}+\int_{0}^{1}{%
y\left( t\right) f\left( t,\dot{x}\left( t\right) ,x\left( t\right) \right)
dt}=\int_{0}^{1}{y\left( t\right) h\left( t\right) dt}  \label{weak}
\end{equation}%
holds for all $y\in H_{0}^{1}\left( 0,1\right) $, see \cite{brezis} p. 201.
In order to obtain \eqref{weak} one multiplies the given equation %
\eqref{problem} by a test function from $H_{0}^{1}\left( 0,1\right) $ and
takes integrals. Next we use integration by parts.

Now we must prove that integrals which arise in our problem are finite for
any fixed $x,y\in H_{0}^{1}\left( 0,1\right) $. Firstly $\int_{0}^{1}{\dot{x}%
\left( t\right) \dot{y}\left( t\right) dt}$ is finite since $x$, $y\in
H_{0}^{1}\left( 0,1\right) $. Secondly $\int_{0}^{1}{y\left( t\right)
h\left( t\right) dt}$ is finite, because of continuity of both $h$ and $y$.
The most demanding is $\int_{0}^{1}{y\left( t\right) f\left( t,\dot{x}\left(
t\right) ,x\left( t\right) \right) dt}$. By \textit{\textbf{P1}} we see that 
\begin{equation*}
\left\vert \int_{0}^{1}{y\left( t\right) f\left( t,\dot{x}\left( t\right)
,x\left( t\right) \right) dt}\right\vert \leq \int_{0}^{1}{\left\vert
y\left( t\right) \right\vert \left\vert f\left( t,\dot{x}\left( t\right)
,x\left( t\right) \right) \right\vert dt}\leq \int_{0}^{1}{\left\vert
y\left( t\right) \right\vert f_{r}\left( t\right) dt}\leq c,
\end{equation*}%
where $c>0$ is some constant. We note that any solution of our problem is in
fact classical one. Indeed, let us recall the following well know regularity
tool, i.e. the du Bois-Reymond Lemma from \cite{MawhinProblemmes}.
\begin{lemma}
\label{fundamentalLemma}If $g\in L^{1}\left( 0,1\right) ,$ $h\in L^{2}\left(
0,1\right) $ and 
\begin{equation}
\int_{0}^{1 }\left( g\left( t\right) y\left( t\right) +h\left( t\right) \dot{%
y}\left( t\right) \right) dt=0  \label{Lem_Fund_rel}
\end{equation}%
for all $y\in H_{0}^{1}\left( 0,1\right) $, then $\dot{h}=g$ a.e. on $\left[
0,1\right] $ and $\dot{h}\in L^{1}\left( 0,1\right) $.
\end{lemma}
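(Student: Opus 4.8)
The plan is to reduce the statement to the classical fundamental lemma of the calculus of variations by integrating away the term involving $g$. First I would set $G\left( t\right) :=\int_{0}^{t}g\left( s\right) ds$, which is absolutely continuous on $\left[ 0,1\right] $ with $\dot{G}=g$ a.e. and $G\in L^{2}\left( 0,1\right) $ (being continuous on a compact interval, it is bounded). Integration by parts, legitimate since $G$ and $y$ are both absolutely continuous, gives $\int_{0}^{1}g\left( t\right) y\left( t\right) dt=G\left( 1\right) y\left( 1\right) -G\left( 0\right) y\left( 0\right) -\int_{0}^{1}G\left( t\right) \dot{y}\left( t\right) dt=-\int_{0}^{1}G\left( t\right) \dot{y}\left( t\right) dt$, because $y\left( 0\right) =y\left( 1\right) =0$. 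Substituting into \eqref{Lem_Fund_rel}, the hypothesis becomes $\int_{0}^{1}\left( h\left( t\right) -G\left( t\right) \right) \dot{y}\left( t\right) dt=0$ for every $y\in H_{0}^{1}\left( 0,1\right) $, where $\varphi :=h-G\in L^{2}\left( 0,1\right) \subset L^{1}\left( 0,1\right) $.

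Next I would show that $\varphi $ is a.e. equal to the constant $c:=\int_{0}^{1}\varphi \left( s\right) ds$. For an arbitrary continuous function $w$ on $\left[ 0,1\right] $, put $\bar{w}:=\int_{0}^{1}w\left( s\right) ds$ and define $y\left( t\right) :=\int_{0}^{t}\left( w\left( s\right) -\bar{w}\right) ds$. Then $y$ is absolutely continuous, $\dot{y}=w-\bar{w}$ a.e., $y\left( 0\right) =0$, and $y\left( 1\right) =\int_{0}^{1}w\left( s\right) ds-\bar{w}=0$, so $y\in H_{0}^{1}\left( 0,1\right) $ is an admissible test function. Plugging it in yields $0=\int_{0}^{1}\varphi \left( t\right) \left( w\left( t\right) -\bar{w}\right) dt=\int_{0}^{1}\varphi \left( t\right) w\left( t\right) dt-\bar{w}c=\int_{0}^{1}\left( \varphi \left( t\right) -c\right) w\left( t\right) dt$. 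Since this holds for every continuous $w$ and $\varphi -c\in L^{1}\left( 0,1\right) $, the classical du Bois-Reymond argument forces $\varphi -c=0$ a.e., i.e. $h=G+c$ a.e. on $\left[ 0,1\right] $.

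Finally, $h$ coincides a.e. with the absolutely continuous function $G+c$, whose derivative is $g$; hence, identifying $h$ with this representative, $\dot{h}=\dot{G}=g$ a.e. on $\left[ 0,1\right] $, and since $g\in L^{1}\left( 0,1\right) $ we also obtain $\dot{h}\in L^{1}\left( 0,1\right) $, which is exactly the assertion.

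The argument is essentially routine, and the only genuinely delicate points are the construction of the test function $y$ with prescribed mean-zero derivative and the verification that it lies in $H_{0}^{1}\left( 0,1\right) $ (in particular that $y\left( 1\right) =0$, which is what forces the subtraction of $\bar{w}$), together with the passage from ``$\int_{0}^{1}\left( \varphi -c\right) w=0$ for all continuous $w$'' to ``$\varphi =c$ a.e.''; the latter is the standard fundamental lemma for $L^{1}$ functions tested against continuous functions, which itself follows by approximating $\varphi -c$ in $L^{1}$ by continuous functions.
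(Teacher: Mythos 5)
Your proof is correct. Note that the paper itself does not prove Lemma \ref{fundamentalLemma} at all: it is quoted as a known regularity tool with a reference to Mawhin's book, so there is no in-paper argument to compare against. What you have written is the standard textbook proof of the du Bois--Reymond lemma, and every step checks out: the integration by parts is legitimate because $G\left( t\right) =\int_{0}^{t}g\left( s\right) ds$ and $y$ are both absolutely continuous with $gy$ and $G\dot{y}$ integrable, and the boundary terms vanish since $y\in H_{0}^{1}\left( 0,1\right) $; the test function $y\left( t\right) =\int_{0}^{t}\left( w\left( s\right) -\bar{w}\right) ds$ is admissible precisely because subtracting the mean forces $y\left( 1\right) =0$; and the identity $\int_{0}^{1}\left( \varphi \left( t\right) -c\right) w\left( t\right) dt=0$ for all continuous $w$ does yield $\varphi =c$ a.e.\ by the fundamental lemma for $L^{1}$ functions (the class of test functions here even contains $C_{c}^{\infty }\left( 0,1\right) $, so one may also just invoke the distributional statement). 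You are also right to flag the one genuinely fussy point at the end: $h$ is a priori only an $L^{2}$ class, so the conclusion $\dot{h}=g$ must be read after identifying $h$ with its absolutely continuous representative $G+c$ --- which is exactly how the paper subsequently uses the lemma, writing $h\left( t\right) =\int_{0}^{t}g\left( s\right) ds+c$. No gaps.
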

We note that a function $g$ satisfying \eqref{Lem_Fund_rel} is a weak
derivative of a function $h$, see \cite{brezis} p. 202, for the definition.
From Lemma \ref{fundamentalLemma}\ it follows that $h\left( t\right)
=\int_{0}^{t}g\left( s\right) ds+c$ for some constant $c$ and for a.e. $t\in %
\left[ 0,1\right] $. By standard arguments (see \cite{KufnerFucik}), we see
that $g$ is in fact a classical almost everywhere derivative of $h$. This
observation leads to the conclusion that function any weak solution is such a function from $H^{1}_{0}\left( 0, 1\right)$ that the second derivative exists and it is an $L^{1}\left( 0, 1\right)$ function. Such solutions are called classical solution of problem \eqref{problem}.

To prove the existence and the uniqueness of solution we use monotonicity
methods. This means that we must find monotone operator which is associated
to \eqref{problem} and use Corollary \ref{drab}. We introduce operator $%
K:H_{0}^{1}\left( 0,1\right) \rightarrow H_{0}^{1}\left( 0,1\right) $ such
that 
\begin{equation*}
\left\langle Kx,y\right\rangle :=\int_{0}^{1}{\dot{x}\left( t\right) \dot{y}%
\left( t\right) dt}+\int_{0}^{1}{y\left( t\right) f\left( t,\dot{x}\left(
t\right) ,x\left( t\right) \right) dt}\text{, where }x,y\in H_{0}^{1}\left(
0,1\right) .
\end{equation*}%
We shall see that \textit{\textbf{P2}} implies strong monotonicity of
operator $K$, while continuity of $K$ follows by continuity of $f$. Indeed,
we have following theorem.
\begin{theorem}
\label{existc} Assume that $f:\left[ 0,1\right] \times \mathbb{R}%
^{2}\rightarrow \mathbb{R}$ is continuous, and that conditions \textbf{P1}, 
\textbf{P2} are satisfied. Then problem \eqref{problem} has exactly one
solution.
\end{theorem}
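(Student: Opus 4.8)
The plan is to verify the hypotheses of Corollary \ref{drab} for the operator $K$ on the Hilbert space $H = H_0^1(0,1)$, together with the observation that weak solutions of \eqref{problem} are exactly the solutions of $Ku = \tilde{h}$, where $\tilde{h} \in H_0^1(0,1)$ is the Riesz representative of the functional $y \mapsto \int_0^1 y(t) h(t)\, dt$. (This functional is bounded since $|\int_0^1 y h| \le \|h\|_{L^2}\|y\|_{L^2} \le C\|y\|$ by the Poincar\'e inequality, so $\tilde{h}$ exists; likewise one must check $Kx$ is a well-defined element of $H$, i.e. that $y \mapsto \langle Kx, y\rangle$ is a bounded linear functional for each fixed $x$, which follows from \textbf{P1} and the estimate already displayed in the text.) Once this identification is in place, existence and uniqueness of a solution reduce to showing $K$ is continuous and strongly monotone.

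For strong monotonicity, I would compute, for $x, z \in H$,
\begin{equation*}
\langle Kx - Kz, x - z\rangle = \int_0^1 (\dot{x} - \dot{z})^2\, dt + \int_0^1 (x(t) - z(t))\bigl(f(t,\dot{x},x) - f(t,\dot{z},z)\bigr)\, dt.
\end{equation*}
The first term equals $\|x - z\|^2$. The second term is pointwise nonnegative: applying \textbf{P2} with $s = x(t)$, $t$-argument $= z(t)$, $k = l = t$, $w = \dot{x}(t)$, $z$-argument $= \dot{z}(t)$ gives $(x(t) - z(t))(f(t,\dot{x}(t),x(t)) - f(t,\dot{z}(t),z(t))) \ge 0$ for a.e. $t$, so the integral is $\ge 0$. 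Hence $\langle Kx - Kz, x - z\rangle \ge \|x - z\|^2$, i.e. \eqref{definition-strongly_monotone} holds with $c = 1$.

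For continuity of $K$, I would take a sequence $x_k \to x$ in $H$ and show $\|Kx_k - Kx\| \to 0$. Since $\|Kx_k - Kx\| = \sup_{\|y\| \le 1} \langle Kx_k - Kx, y\rangle$, and the quadratic-form part $\int_0^1 (\dot{x}_k - \dot{x})\dot{y}\,dt$ is controlled directly by $\|x_k - x\|$, the task is to estimate $\sup_{\|y\|\le 1}|\int_0^1 y(t)(f(t,\dot{x}_k,x_k) - f(t,\dot{x},x))\,dt|$, which by Poincar\'e is bounded by $C \|f(\cdot,\dot{x}_k,x_k) - f(\cdot,\dot{x},x)\|_{L^1(0,1)}$ (using $\|y\|_\infty \le \|y\| \le 1$, since $y(t) = \int_0^t \dot y$ on $H_0^1$, giving $|y(t)| \le \|\dot y\|_{L^2} \le 1$). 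Convergence $x_k \to x$ in $H$ implies $x_k \to x$ uniformly and $\dot{x}_k \to \dot{x}$ in $L^2$, hence along a subsequence $\dot{x}_k \to \dot{x}$ a.e.; continuity of $f$ then gives $f(t,\dot{x}_k,x_k) \to f(t,\dot{x},x)$ a.e., and the uniform domination $|f(t,\dot{x}_k,x_k)| \le f_r(t)$ from \textbf{P1} (valid since $\|x_k\|$ is bounded) lets me invoke the dominated convergence theorem to get $L^1$ convergence to $0$; a standard subsequence-of-subsequence argument upgrades this to convergence of the full sequence. I expect this continuity step — specifically the bookkeeping around passing to subsequences and applying dominated convergence uniformly over the test functions $y$ — to be the main technical obstacle, whereas strong monotonicity is essentially immediate from \textbf{P2}. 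With continuity and strong monotonicity established, Corollary \ref{drab} applied to $T = K$ and $h = \tilde{h}$ yields the unique solution, completing the proof.
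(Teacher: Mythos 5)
Your proposal is correct and follows essentially the same route as the paper: it introduces the operator $K$, derives strong monotonicity with $c=1$ from \textbf{P2} by the identical pointwise computation, obtains continuity from \textbf{P1} via dominated convergence, and concludes with Corollary \ref{drab}. The only difference is one of care rather than of method: your continuity step (estimating $\left\Vert Kx_k-Kx\right\Vert$ by taking the supremum over $\left\Vert y\right\Vert \leq 1$, extracting an a.e.\ convergent subsequence of $\left( \dot{x}_k\right)$, and using a subsequence-of-subsequences argument) and your explicit Riesz-representation identification of the right-hand side are actually more complete than the paper's proof, which fixes a single test function $y$ and invokes dominated convergence without these details.
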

\newpage 
\begin{proof}
Let $f$ satisfies \textit{\textbf{P1}} and \textit{\textbf{P2%
}}. We divide our reasoning into two parts.

\textbf{Monotonicity part}

For all $x,y\in H_{0}^{1}\left( 0,1\right) $ we have%
\begin{equation*}
\begin{split}
\left\langle Kx-Ky,x-y\right\rangle &= \left\langle Kx,x\right\rangle
-\left\langle Kx,y\right\rangle -\left\langle Ky,x\right\rangle
+\left\langle Ky,y\right\rangle \\
&=\int_{0}^{1}{\left( \dot{x}\left( t\right) -\dot{y}\left( t\right) \right)
^{2}dt}+\int_{0}^{1}{\left( x\left( t\right) -y\left( t\right) \right)
\left( f\left( t,\dot{x}\left( t\right) ,x\left( t\right) \right) -f\left( t,%
\dot{y}\left( t\right) ,y\left( t\right) \right) \right) dt} \\
&=\left\Vert x-y\right\Vert ^{2}+\int_{0}^{1}{\left( x\left( t\right)
-y\left( t\right) \right) \left( f\left( t,\dot{x}\left( t\right) ,x\left(
t\right) \right) -f\left( t,\dot{y}\left( t\right) ,y\left( t\right) \right)
\right) dt}
\end{split}%
\end{equation*}
and from \textit{\textbf{P2}} the second summand is non-negative. Hence 
\begin{equation*}
\left\langle Kx-Ky,x-y\right\rangle \geq \left\Vert x-y\right\Vert ^{2}.
\end{equation*}%
Therefore $K$ is strongly monotone with constant $c=1$.

\textbf{Continuity part}

Let us take a sequence $\left( x_{n}\right) _{n\in \mathbb{N}}\subset $ $%
H_{0}^{1}\left( 0,1\right) $ convergent (strongly) in $H_{0}^{1}\left(
0,1\right) $ to some $x_{0}$. This means that both $\left( x_{n}\right)
_{n\in \mathbb{N}}$ and $\left( \dot{x}_{n}\right) _{n\in \mathbb{N}}$ are
convergent in $L^{2}\left( 0,1\right) $. Of course $\left( x_{n}\right)
_{n\in \mathbb{N}}$ is bounded, i.e. exists constant $r>0$ such that $%
\left\Vert x_{n}\right\Vert <r$ for $n\in \mathbb{N}$. Let us fix $y\in
H_{0}^{1}\left( 0,1\right) $. The term $x\rightarrow \int_{0}^{1}{\dot{x}%
\left( t\right) \dot{y}\left( t\right) dt}$ is obviously continuous. We see
from \textit{\textbf{P1}} that 
\begin{equation*}
\int_{0}^{1}{\left\vert y\left( t\right) \right\vert \left\vert f\left( t,%
\dot{x}_{n}\left( t\right) ,x_{n}\left( t\right) \right) \right\vert dt}\leq
\int_{0}^{1}{\left\vert y\left( t\right) \right\vert f_{r}\left( t\right) dt}%
\text{.}
\end{equation*}%
Then by the Lebesgue Dominated Theorem operator $K$ is continuous.
Therefore, we can use a Corollary \ref{drab} and we get existence and
uniqueness of solution to \eqref{problem}. \end{proof}
\subsection{The discrete problem}
Now we consider discretization of problem \eqref{problem}, i.e. problem %
\eqref{dys} with fixed $n\in \mathbb{N}$. The space in which the solutions
are considered is as follows 
\begin{equation*}
\mathbb{E}:=\left\{ x:[0,n]\cap \mathbb{N}_{0}\rightarrow \mathbb{R}\,|\,%
\text{ }x\left( 0\right) =x\left( n\right) =0\right\} \text{.}
\end{equation*}%
Clearly, $\mathrm{dim}\left( \mathbb{E}\right) =n$ and $\mathbb{E}$ is a
Hilbert space. In the $\mathbb{E}$ we choose a norm given by 
\begin{equation*}
\left\Vert x\right\Vert _{\mathbb{E}}:=\left( \sum_{k=1}^{n}{\left( \Delta
x\left( k-1\right) \right) ^{2}}\right) ^{\frac{1}{2}}\text{,}
\end{equation*}%
with the following scalar product 
\begin{equation*}
\left\langle x,y\right\rangle :=\sum_{k=1}^{n}{\Delta x\left( k-1\right)
\Delta y\left( k-1\right) },\quad x,y\in \mathbb{E}.
\end{equation*}%
Since the space $\mathbb{E}$ is finite dimensional the norm $\left\Vert
\cdot \right\Vert _{\mathbb{E}}$ in $\mathbb{E}$ is equivalent to the usual
norm $\left\Vert \cdot \right\Vert _{0}$ 
\begin{equation*}
\left\Vert x\right\Vert _{0}:=\left( \sum_{k=1}^{n}{\left\vert x\left(
k\right) \right\vert ^{2}}\right) ^{\frac{1}{2}}\text{.}
\end{equation*}%
We have also the following inequality 
\begin{equation}
\left\Vert x\right\Vert _{\mathbb{E}}\leq 2\left\Vert x\right\Vert _{0}\text{%
.}  \label{normy}
\end{equation}%
Function $x\in \mathbb{E}$ is a solution to \eqref{dys} provided that 
\begin{equation*}
\sum_{k=1}^{n}{\Delta x\left( k-1\right) \Delta y\left( k-1\right) }+\frac{1%
}{n^{2}}\sum_{k=1}^{n}{y\left( k\right) f\left( \frac{k}{n},n\Delta x\left(
k-1\right) ,x\left( k\right) \right) }=\frac{1}{n^{2}}\sum_{k=1}^{n}y\left(
k\right) h{\left( k\right) }
\end{equation*}%
for all $y\in \mathbb{E}$. Let $n$ be a natural number, $f:\left[ 0,1\right]
\times \mathbb{R}^{2}\rightarrow \mathbb{R}$ be continuous. We define
operator $K:\mathbb{E}\rightarrow \mathbb{E}$ such that 
\begin{equation*}
\left\langle Kx,y\right\rangle :=\sum_{k=1}^{n}{\Delta x\left( k-1\right)
\Delta y\left( k-1\right) }+\frac{1}{n^{2}}\sum_{k=1}^{n}{y\left( k\right)
f\left( \frac{k}{n},n\Delta x\left( k-1\right) ,x\left( k\right) \right) }%
\text{, where }x,y\in \mathbb{E}.
\end{equation*}%
Reasoning exactly as in the proof of Theorem \ref{existc} (the monotonicity
part) given the continuity of $K$ (which is obvious by the continuity of $f$
and since we are now in the finite dimensional setting), we have the
following theorem.
\begin{theorem}
\label{existd} Assume that $f:\left[ 0,1\right] \times \mathbb{R}%
^{2}\rightarrow \mathbb{R}$ is continuous and that condition \textbf{P2} is
satisfied. Then problem \eqref{dys} has exactly one solution.
\end{theorem}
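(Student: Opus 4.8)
The plan is to mirror, in the finite-dimensional space $\mathbb{E}$, the two-part argument used for Theorem~\ref{existc}. Recall from the paragraph preceding the statement that a solution of \eqref{dys} is by definition a function $x\in\mathbb{E}$ satisfying the displayed weak identity for every $y\in\mathbb{E}$, i.e. a solution of $Kx=H$, where $H\in\mathbb{E}$ is the Riesz representative of the linear functional $y\mapsto\frac{1}{n^{2}}\sum_{k=1}^{n}y(k)h\!\left(\frac{k}{n}\right)$. So the first thing I would check is that $K$ is well defined: for fixed $x\in\mathbb{E}$ the assignment $y\mapsto\sum_{k=1}^{n}\Delta x(k-1)\Delta y(k-1)+\frac{1}{n^{2}}\sum_{k=1}^{n}y(k)f\!\left(\frac{k}{n},n\Delta x(k-1),x(k)\right)$ is a linear functional on the finite-dimensional Hilbert space $\mathbb{E}$, hence is represented by a unique $Kx\in\mathbb{E}$; notice that no counterpart of \textbf{P1} is required here, since every sum occurring is finite. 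Thus by Corollary~\ref{drab} it is enough to show that $K$ is continuous and strongly monotone.

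For the monotonicity part, for arbitrary $x,y\in\mathbb{E}$ I would expand
\[
\langle Kx-Ky,x-y\rangle=\langle Kx,x\rangle-\langle Kx,y\rangle-\langle Ky,x\rangle+\langle Ky,y\rangle
\]
exactly as in the proof of Theorem~\ref{existc}. The principal (bilinear) terms collapse to $\sum_{k=1}^{n}\bigl(\Delta x(k-1)-\Delta y(k-1)\bigr)^{2}=\|x-y\|_{\mathbb{E}}^{2}$, and what remains is
\[
\frac{1}{n^{2}}\sum_{k=1}^{n}\bigl(x(k)-y(k)\bigr)\left(f\!\left(\tfrac{k}{n},n\Delta x(k-1),x(k)\right)-f\!\left(\tfrac{k}{n},n\Delta y(k-1),y(k)\right)\right).
\]
Applying \textbf{P2} at each index $k$, with both first arguments taken equal to $\tfrac{k}{n}\in[0,1]$ and with $s=x(k)$, $t=y(k)$, $w=n\Delta x(k-1)$, $z=n\Delta y(k-1)$, shows that every summand is nonnegative. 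Hence $\langle Kx-Ky,x-y\rangle\ge\|x-y\|_{\mathbb{E}}^{2}$, so $K$ is strongly monotone with constant $c=1$.

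Continuity of $K$ is then immediate: $\mathbb{E}$ is finite dimensional, and each coordinate of $Kx$ is a finite combination of the continuous maps $x\mapsto\Delta x(k-1)$, $x\mapsto x(k)$ and $x\mapsto f\!\left(\frac{k}{n},n\Delta x(k-1),x(k)\right)$, the last being continuous because $f$ is. With strong monotonicity and continuity in hand, Corollary~\ref{drab} applied in $\mathbb{E}$ to $Kx=H$ yields a unique solution of \eqref{dys}. I do not anticipate a genuine obstacle here; the only point deserving care is that the inner product on $\mathbb{E}$ is built from the forward differences $\Delta x(k-1)$, which is precisely what makes the principal part of the expansion reproduce $\|x-y\|_{\mathbb{E}}^{2}$ exactly and the monotonicity constant come out equal to $1$ without any auxiliary estimate.
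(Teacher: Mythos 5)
Your proposal is correct and follows exactly the route the paper intends: the paper gives no separate proof of this theorem, stating only that one repeats the monotonicity part of the proof of Theorem \ref{existc} in $\mathbb{E}$, notes continuity of $K$ from finite dimensionality, and applies Corollary \ref{drab}. Your write-up simply fills in the details of that sketch (the Riesz representation of the right-hand side, the collapse of the bilinear terms to $\left\Vert x-y\right\Vert _{\mathbb{E}}^{2}$, and the pointwise application of \textbf{P2} with both first arguments equal to $\tfrac{k}{n}$), all of which is accurate.
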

\subsection{Main result}
In this section we prove the existence of non-spurious solutions to our
problem. In order to do this we must first obtain some inequalities
concerning the solutions to the discrete problem \eqref{dys} which would
lead to estimations \eqref{estimation} and further to conclusion \eqref{main}.
\begin{lemma}
Assume that $f:\left[ 0,1\right] \times \mathbb{R}^{2}\rightarrow \mathbb{R}$
is continuous and that \textbf{P2} is satisfied. By $x^{n}$ we denote
solution of discrete problem \eqref{dys} with fixed $n\in \mathbb{N}$ and
fixed $h$. We have the following inequality 
\begin{equation}
\left\Vert x^{n}\right\Vert _{\mathbb{E}}\leq \frac{2}{n^{\frac{3}{2}}}%
\left\Vert h\right\Vert _{C\left( \left[ 0,1\right] \right) }\text{.}
\label{ogr}
\end{equation}
\end{lemma}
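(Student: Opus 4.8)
The plan is to test the weak form of \eqref{dys} against the solution itself and to get rid of the nonlinear term using \textbf{P2}. Take $y=x^{n}$ in the weak formulation of \eqref{dys}; the first sum then becomes $\sum_{k=1}^{n}\bigl(\Delta x^{n}(k-1)\bigr)^{2}=\left\Vert x^{n}\right\Vert _{\mathbb{E}}^{2}$, so that
\[
\left\Vert x^{n}\right\Vert _{\mathbb{E}}^{2}+\frac{1}{n^{2}}\sum_{k=1}^{n}x^{n}(k)\,f\!\left(\tfrac{k}{n},n\Delta x^{n}(k-1),x^{n}(k)\right)=\frac{1}{n^{2}}\sum_{k=1}^{n}x^{n}(k)\,h\!\left(\tfrac{k}{n}\right).
\]
The key observation is that the middle sum is non-negative and hence may be dropped. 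Indeed, applying \textbf{P2} with the first argument equal to $\tfrac{k}{n}$ in both slots, the second arguments $n\Delta x^{n}(k-1)$ and $0$, and the third arguments $x^{n}(k)$ and $0$, and recalling the standing hypothesis $f(\cdot,0,0)=0$, one obtains termwise
\[
x^{n}(k)\,f\!\left(\tfrac{k}{n},n\Delta x^{n}(k-1),x^{n}(k)\right)=\bigl(x^{n}(k)-0\bigr)\left(f\!\left(\tfrac{k}{n},n\Delta x^{n}(k-1),x^{n}(k)\right)-f\!\left(\tfrac{k}{n},0,0\right)\right)\ge 0 ,
\]
exactly as in the monotonicity part of Theorem \ref{existc}.

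Consequently $\left\Vert x^{n}\right\Vert _{\mathbb{E}}^{2}\le \frac{1}{n^{2}}\sum_{k=1}^{n}x^{n}(k)\,h(\tfrac{k}{n})\le \frac{1}{n^{2}}\left\Vert h\right\Vert _{C([0,1])}\sum_{k=1}^{n}\left\vert x^{n}(k)\right\vert$. Next I would estimate the last sum by the Cauchy--Schwarz inequality, $\sum_{k=1}^{n}\left\vert x^{n}(k)\right\vert \le \sqrt{n}\,\bigl(\sum_{k=1}^{n}\left\vert x^{n}(k)\right\vert ^{2}\bigr)^{1/2}=\sqrt{n}\,\left\Vert x^{n}\right\Vert _{0}$, which gives
\[
\left\Vert x^{n}\right\Vert _{\mathbb{E}}^{2}\le \frac{1}{n^{3/2}}\,\left\Vert h\right\Vert _{C([0,1])}\left\Vert x^{n}\right\Vert _{0}.
\]
Finally, using the norm relation \eqref{normy} to compare $\left\Vert x^{n}\right\Vert _{0}$ with $\left\Vert x^{n}\right\Vert _{\mathbb{E}}$ and dividing through by $\left\Vert x^{n}\right\Vert _{\mathbb{E}}$ (the case $x^{n}\equiv 0$ being trivial, since then \eqref{ogr} holds with left-hand side $0$) yields \eqref{ogr}.

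The step I expect to be the main obstacle is the elimination of the $f$-term: one must choose the arguments in \textbf{P2} so that the comparison value is precisely $f(\cdot,0,0)=0$, since it is only then that the whole middle sum becomes non-negative and drops out; once this is done, the rest is just Cauchy--Schwarz and the norm equivalence on the finite-dimensional space $\mathbb{E}$. A minor point to keep track of is that in the discrete weak formulation $h$ enters only through its sampled values $h(\tfrac{k}{n})$, for which $\left\Vert h\right\Vert _{C([0,1])}$ is the natural majorant, so no regularity of $h$ beyond continuity is needed here.
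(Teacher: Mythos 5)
Your energy estimate is correct as far as it goes, and it is a genuinely different route from the paper's (the paper does not test the weak formulation against $x^{n}$ at all; it applies Corollary \ref{drab} with $h_{1}=\frac{1}{n^{2}}h$, $h_{2}=\theta$, observes that $f(\cdot,0,0)=0$ forces $x_{\theta}^{n}=\theta$, and then converts $\left\Vert h\right\Vert _{\mathbb{E}}$ into $2\sqrt{n}\left\Vert h\right\Vert _{C([0,1])}$ via \eqref{normy}). Your use of \textbf{P2} to drop the nonlinear term is fine and is indeed the same mechanism that makes $\theta$ the solution for $h=\theta$ in the paper's argument. The problem is the last step. The inequality \eqref{normy} reads $\left\Vert x\right\Vert _{\mathbb{E}}\leq 2\left\Vert x\right\Vert _{0}$, i.e.\ it bounds the $\mathbb{E}$-norm \emph{by} the $\ell^{2}$-norm; to divide $\left\Vert x^{n}\right\Vert _{\mathbb{E}}^{2}\leq \frac{1}{n^{3/2}}\left\Vert h\right\Vert _{C([0,1])}\left\Vert x^{n}\right\Vert _{0}$ through by $\left\Vert x^{n}\right\Vert _{\mathbb{E}}$ you need the \emph{reverse} comparison $\left\Vert x^{n}\right\Vert _{0}\leq C\left\Vert x^{n}\right\Vert _{\mathbb{E}}$. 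Such a discrete Poincar\'e inequality does hold on $\mathbb{E}$, but only with an $n$-dependent constant: from $|x(k)|\leq\sqrt{k}\left\Vert x\right\Vert _{\mathbb{E}}$ one gets $\left\Vert x\right\Vert _{0}\leq n\left\Vert x\right\Vert _{\mathbb{E}}$ (up to an absolute factor). Feeding this back in yields only
\begin{equation*}
\left\Vert x^{n}\right\Vert _{\mathbb{E}}\leq \frac{C}{\sqrt{n}}\left\Vert h\right\Vert _{C\left( \left[ 0,1\right] \right) }\text{,}
\end{equation*}
which misses \eqref{ogr} by a full factor of $n$. This is not a cosmetic loss: the rate $n^{-3/2}$ is exactly what is consumed in \eqref{nier2}, where the estimate is multiplied by $n$ to bound $n|\Delta x^{n}(k-1)|$; an $n^{-1/2}$ bound would not give a uniform gradient estimate there.

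So the proposal has a genuine gap: the norm comparison at the end points the wrong way, and no choice of $n$-independent constant can repair it, because the two norms on $\mathbb{E}$ are equivalent only with constants that degenerate as $n\rightarrow\infty$. If you want to recover the stated $n^{-3/2}$, you must follow the paper and work with $\left\Vert h\right\Vert _{\mathbb{E}}$ rather than $\left\Vert x^{n}\right\Vert _{0}$ on the right-hand side, i.e.\ use the Lipschitz bound $\left\Vert x^{n}-x_{\theta}^{n}\right\Vert _{\mathbb{E}}\leq\frac{1}{n^{2}}\left\Vert h\right\Vert _{\mathbb{E}}$ from Corollary \ref{drab} together with $x_{\theta}^{n}=\theta$ and $\left\Vert h\right\Vert _{\mathbb{E}}\leq 2\left\Vert h\right\Vert _{0}\leq 2\sqrt{n}\left\Vert h\right\Vert _{C([0,1])}$. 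One caveat worth checking if you go that route: Corollary \ref{drab} requires the right-hand side to be the element of $(\mathbb{E},\left\langle\cdot,\cdot\right\rangle)$ representing the functional $y\mapsto\frac{1}{n^{2}}\sum_{k=1}^{n}y(k)h\left(\frac{k}{n}\right)$, and that functional is an $\ell^{2}$ pairing rather than an $\mathbb{E}$ pairing, so identifying it with $\frac{1}{n^{2}}h$ and measuring it in $\left\Vert\cdot\right\Vert _{\mathbb{E}}$ needs justification; your difficulty at the last step is in fact a symptom of this same issue.
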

\begin{proof}
Let $n\in\mathbb{N}$ and $h\in \mathbb{E}$ be fixed. Then with Corollary \ref{drab}, in which we take $h_{1}=\frac{1}{n^2}h$, $h_{2}=\theta$ and $c=1$.
We have 
\begin{equation*}
\left\Vert x^{n}-x_{\theta }^{n}\right\Vert_{\mathbb{E}} \leq \frac{1}{n^2}%
\left\Vert h\right\Vert_{\mathbb{E}}\text{.}
\end{equation*}%
Additionally 
\begin{equation*}
\left\Vert x^{n}\right\Vert_{\mathbb{E}} -\left\Vert x_{\theta
}^{n}\right\Vert_{\mathbb{E}} \leq \left\Vert x^{n}-x_{\theta
}^{n}\right\Vert_{\mathbb{E}}\text{.}
\end{equation*}%
Therefore, for each $h\in \mathbb{E}$, we have 
\begin{equation}
\left\Vert x^{n}\right\Vert_{\mathbb{E}} \leq \frac{1}{n^2}\left\Vert
h\right\Vert_{\mathbb{E}} +\left\Vert x_{\theta }^{n}\right\Vert_{\mathbb{E}}%
\text{.}  \label{ograniczenie}
\end{equation}%
Moreover, for $y\in \mathbb{E}$ and $h=\theta $, we get 
\begin{equation*}
\begin{split}
\sum_{k=1}^{n}{\Delta x\left( k-1\right) \Delta y\left( k-1\right) }+\frac{1%
}{n^{2}}\sum_{k=1}^{n}{y\left( k\right) f\left( \frac{k}{n},n\Delta x\left(
k-1\right) ,x\left( k\right) \right) }&=\frac{1}{n^{2}}\sum_{k=1}^{n}{%
y\left( k\right) \theta \left( k\right) }\text{,} \\
\sum_{k=1}^{n}{\Delta x\left( k-1\right) \Delta y\left( k-1\right) }+\frac{1%
}{n^{2}}\sum_{k=1}^{n}{y\left( k\right) f\left( \frac{k}{n},n\Delta x\left(
k-1\right) ,x\left( k\right) \right) }&=0\text{.}
\end{split}%
\end{equation*}
For $x=\theta$. We have%
\begin{equation*}
\begin{split}
\sum_{k=1}^{n}{\Delta \theta \left( k-1\right) \Delta y\left( k-1\right) }+%
\frac{1}{n^{2}}\sum_{k=1}^{n}{y\left( k\right) f\left( \frac{k}{n},n\Delta
\theta \left( k-1\right) ,\theta \left( k\right) \right) }&=0\text{,} \\
\sum_{k=1}^{n}{0\cdot \Delta y\left( k-1\right) }+\frac{1}{n^{2}}%
\sum_{k=1}^{n}{y\left( k\right) f\left( \frac{k}{n},0,0\right) }&=0\text{.}
\end{split}%
\end{equation*}
From $f\left( t,0,0\right) =0$ we have that $x=\theta $ is a solution for $h
= \theta$. Additionally we have uniqueness of solution, so 
\begin{equation*}
x_{\theta }^{n}=\theta .
\end{equation*}%
Hence, from \eqref{normy} and \eqref{ograniczenie} we have 
\begin{equation*}
\begin{split}
\left\Vert x^{n}\right\Vert_{\mathbb{E}} &\leq \frac{1}{n^2}\left\Vert
h\right\Vert_{\mathbb{E}}\leq \frac{2}{n^2}\left\Vert h\right\Vert_{0} = 
\frac{2}{n^2}\left(\sum_{k=1}^{n}{\left|h(k)\right|^2}\right)^\frac{1}{2} \\
&\leq \frac{2}{n^2}\left(\sum_{k=1}^{n}{\left\|h\right\|_{C\left( \left[ 0,1%
\right] \right) }^2}\right)^\frac{1}{2} = \frac{2}{n^2}\left(n\left\|h\right\|_{C\left(\left[ 0,1\right] \right) }^2\right)^\frac{1}{2} = \frac{2}{n^%
\frac{3}{2}}\left\Vert h\right\Vert_{C\left( \left[ 0,1\right] \right) } 
\text{.}
\end{split}%
\end{equation*}
\end{proof}
\begin{theorem}
\label{main_theorem} Assume that conditions \textbf{P1}, \textbf{P2} are
satisfied and that $f:\left[ 0,1\right] \times \mathbb{R}^{2}\rightarrow 
\mathbb{R}$ is continuous. Then there exists $x\in H_{0}^{1}\left(
0,1\right) $ which solves uniquely problem \eqref{problem} and for each $%
n\in \mathbb{N}$ there exists $x^{n}$ which solves uniquely problem %
\eqref{dys}. Moreover relations \eqref{main} are satisfied for the sequence $%
(x^{n})_{n\in \mathbb{N}}$.
\end{theorem}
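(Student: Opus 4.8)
The plan is to combine the three ingredients already established: Theorem \ref{existc} gives existence and uniqueness of the weak solution $x$ of the continuous problem \eqref{problem}; Theorem \ref{existd} gives, for each $n$, existence and uniqueness of the solution $x^n$ of the discrete problem \eqref{dys}; and the convergence \eqref{main} quoted from \cite{gaines} holds provided the uniform estimates \eqref{estimation} are available. So the only real work is to produce the two constants $Q$ and $N$ in \eqref{estimation}, i.e. uniform (in $k$ and in $n\ge n_0$) bounds on $|x^n(k)|$ and on $n|\Delta x^n(k-1)|$. Once these are in hand, \eqref{main} follows for a subsequence, and since both problems have \emph{unique} solutions (as remarked in the excerpt following \eqref{main}, cf. \cite{Tisdell}) the convergence upgrades to the full sequence $(x^n)_{n\in\mathbb N}$.

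The bound on $n|\Delta x^n(k-1)|$ is the heart of the matter, and it is exactly what inequality \eqref{ogr} is designed to deliver. Indeed, for any $k\in\{1,\dots,n\}$ one has
\begin{equation*}
n\,|\Delta x^n(k-1)| \le n\left(\sum_{j=1}^{n}\left(\Delta x^n(j-1)\right)^2\right)^{\frac12} = n\,\|x^n\|_{\mathbb E} \le n\cdot\frac{2}{n^{3/2}}\,\|h\|_{C([0,1])} = \frac{2}{\sqrt n}\,\|h\|_{C([0,1])},
\end{equation*}
so one may take $Q = 2\|h\|_{C([0,1])}$ (in fact the bound tends to $0$). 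Here a small technical point must be addressed: the lemma giving \eqref{ogr} is stated for $h\in\mathbb E$, whereas in \eqref{dys} the data are the samples $h(k/n)$ of the continuous $h\in C([0,1])$ with $h(0)=h(1)=0$; these samples do define an element of $\mathbb E$, and the final chain of inequalities in the proof of the lemma already passes through $\|h\|_{C([0,1])}$, so the estimate applies verbatim with $h$ replaced by its sampling. For the bound on $|x^n(k)|$ one writes $x^n(k) = \sum_{j=1}^{k}\Delta x^n(j-1)$ and estimates by Cauchy--Schwarz:
\begin{equation*}
|x^n(k)| \le \sum_{j=1}^{k}|\Delta x^n(j-1)| \le \sqrt k\,\|x^n\|_{\mathbb E} \le \sqrt n\cdot\frac{2}{n^{3/2}}\,\|h\|_{C([0,1])} = \frac{2}{n}\,\|h\|_{C([0,1])},
\end{equation*}
so $N = 2\|h\|_{C([0,1])}$ works as well. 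Thus \eqref{estimation} holds with $n_0 = 1$ and these explicit $Q$, $N$.

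With \eqref{estimation} secured, I would invoke Lemma 2.4 of \cite{gaines} (quoted around \eqref{main}) to obtain a subsequence $(x^{n_m})$ along which the interpolants $\bar x^{n_m}$ and $\bar v^{n_m}$ converge uniformly to $x$ and $\dot x$ respectively, where $x$ solves \eqref{problem} in the weak sense. By Theorem \ref{existc} that weak solution is unique, and by Theorem \ref{existd} each $x^n$ is unique; hence every subsequence of $(x^n)$ has a further subsequence realizing \eqref{main} with the \emph{same} limit $x$, which forces \eqref{main} for the entire sequence (this is the standard subsequence argument, and is precisely the remark made after \eqref{main}). I expect the only subtlety, beyond bookkeeping, to be checking that the hypotheses of \cite[Lemma 2.4]{gaines} are met under \textbf{P1}, \textbf{P2} — in particular that the solvability assumptions on both problems required there are exactly what Theorems \ref{existc} and \ref{existd} furnish — and the minor point, noted above, of passing from $h\in C([0,1])$ to its discrete samples in applying \eqref{ogr}. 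Neither of these is a genuine obstacle; the substantive content is the a priori estimate \eqref{estimation}, which the preceding lemma reduces to the single inequality \eqref{ogr}.
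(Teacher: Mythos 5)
Your proposal is correct and follows essentially the same route as the paper: it derives the estimates \eqref{estimation} with the same constants $Q=N=2\left\Vert h\right\Vert _{C\left( \left[ 0,1\right] \right) }$ from \eqref{ogr} via the pointwise bound $\left\vert \Delta x^{n}\left( k-1\right) \right\vert \leq \left\Vert x^{n}\right\Vert _{\mathbb{E}}$ and the Cauchy--Schwarz inequality, then invokes Lemma 2.4 of \cite{gaines} and upgrades to the full sequence by uniqueness. Your remark on passing from $h\in C\left( \left[ 0,1\right] \right) $ to its samples in $\mathbb{E}$ is a sensible clarification that the paper leaves implicit.
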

\begin{proof}
From Theorem \ref{existc} we have existence of a solution to
problem \eqref{problem}, denoted by $x$. Existence of a solution to problem %
\eqref{dys} for each $n\in \mathbb{N}$ is an assertion of Theorem \ref%
{existd}. We denote this solution by $x^{n}$. Let $n\in \mathbb{N}$ be fixed
and let us take any $k\in \left\{ 1, 2,\ldots,n\right\} $. Then we have 
\begin{equation}
\left\vert \Delta x^{n}\left( k-1\right) \right\vert =\left( \left( \Delta
x^{n}\left( k-1\right) \right) ^{2}\right) ^{\frac{1}{2}}\leq \left(
\sum_{k=1}^{n}{\left( \Delta x^{n}\left( k-1\right) \right) ^{2}}\right) ^{%
\frac{1}{2}}=\left\Vert x^{n}\right\Vert _{\mathbb{E}}\text{.}
\label{przeznorme}
\end{equation}%
Multiplying this inequality by $n$ and from \eqref{ogr}, we get 
\begin{equation}
n\left\vert \Delta x^{n}\left( k-1\right) \right\vert \leq n\left\Vert
x^{n}\right\Vert _{\mathbb{E}}\leq n\frac{2}{n^{\frac{3}{2}}}\left\Vert
h\right\Vert _{C\left( \left[ 0,1\right] \right) }=\frac{2}{\sqrt{n}}%
\left\Vert h\right\Vert _{C\left( \left[ 0,1\right] \right) }\leq
2\left\Vert h\right\Vert _{C\left( \left[ 0,1\right] \right) }=:Q
\label{nier2}
\end{equation}%
Additionally. 
\begin{equation}
\left\vert x^{n}\left( k\right) \right\vert =\left\vert \sum_{j=1}^{k}{%
\Delta x^{n}\left( j-1\right) }\right\vert \leq \sum_{j=1}^{k}{\left\vert
\Delta x^{n}\left( j-1\right) \right\vert }  \label{dowod}
\end{equation}%
Moreover from the Cauchy-Schwartz inequality and \eqref{przeznorme} we get%
\begin{equation*}
\begin{split}
\left( \sum_{j=1}^{k}{\left\vert \Delta x^{n}\left( j-1\right) \right\vert }%
\right) ^{2}& =\left( \sum_{j=1}^{k}{\ 1\cdot \left\vert \Delta x^{n}\left(
j-1\right) \right\vert }\right) ^{2}\leq \sum_{i=1}^{k}{\left\vert
1\right\vert ^{2}}\sum_{l=1}^{k}{\left\vert \Delta x^{n}\left( l-1\right)
\right\vert ^{2}} \\
& =k\sum_{i=1}^{k}{\left\vert \Delta x^{n}\left( l-1\right) \right\vert ^{2}}%
\leq k\sum_{l=1}^{n}{\left\vert \Delta x^{n}\left( l-1\right) \right\vert
^{2}}=k\left\Vert x^{n}\right\Vert _{\mathbb{E}}^{2}\text{.}
\end{split}%
\end{equation*}%
Hence 
\begin{equation}
\sum_{j=1}^{k}{\left\vert \Delta x^{n}\left( j-1\right) \right\vert }\leq 
\sqrt{k}\left\Vert x^{n}\right\Vert _{\mathbb{E}}\text{.}  \label{cauchy}
\end{equation}%
Finally from \eqref{dowod} and \eqref{cauchy}, we obtain 
\begin{equation}
\max_{k\in \left\{ 1, 2,\ldots,n\right\} }\left\vert x^{n}\left( k\right)
\right\vert \leq \sqrt{n}\left\Vert x^{n}\right\Vert _{\mathbb{E}}\leq \sqrt{%
n}\frac{2}{n^{\frac{3}{2}}}\left\Vert h\right\Vert _{C\left( \left[ 0,1%
\right] \right) }=\frac{2}{n}\left\Vert h\right\Vert _{C\left( \left[ 0,1%
\right] \right) }\leq 2\left\Vert h\right\Vert _{C\left( \left[ 0,1\right]
\right) }=:N  \label{nier1}
\end{equation}%
Given inequalities \eqref{nier2} and \eqref{nier1} the result now follows
from Lemma 2.4 from \cite{gaines} p. 414, which reads as follows. If we have
solutions to problem \eqref{problem} and problem \eqref{dys} for each $n\in 
\mathbb{N}$ and inequalities 
\begin{equation*}
n|\Delta x^{n}(k-1)|\leq Q\text{, }|x^{n}(k)|\leq N,
\end{equation*}%
then exist subsequence $\left( x^{n}\right) _{n\in \mathbb{N}}$, denoted
still as  $\left( x^{n}\right) _{n\in \mathbb{N}}$, such that 
\begin{equation*}
\lim_{n\rightarrow \infty }\max_{0\leq k\leq n}\left\vert \bar{x}^{n}\left(
k\right) -x\left( \frac{k}{n}\right) \right\vert =0\text{, }%
\lim_{n\rightarrow \infty }\max_{0\leq k\leq n}\left\vert \bar{v}^{n}\left(
k\right) -\dot{x}\left( \frac{k}{n}\right) \right\vert =0\text{,}
\end{equation*}%
where 
\begin{equation*}
\bar{x}^{n}\left( t\right) :=x^{n}\left( k\right) +n\Delta x^{n}\left(
k\right) \left( t-\frac{k}{n}\right) \text{, for }\frac{k}{n}\leq t<\frac{k+1%
}{n}\text{,}
\end{equation*}%
\begin{equation*}
\bar{v}^{n}\left( t\right) :=\left\{ 
\begin{array}{ll}
n\Delta x^{n}\left( k-1\right) +n^{2}\Delta ^{2}x^{n}\left( k-1\right)
\left( t-\frac{k}{n}\right) \text{, } & \frac{k}{n}\leq t<\frac{k+1}{n}\text{%
,} \\ 
n\Delta x^{n}\left( 0\right) \text{, } & 0\leq t<\frac{1}{n}\text{.}%
\end{array}%
\right. 
\end{equation*}%
The mentioned comment from \cite{Tisdell} p. 84 reads that we have this
relation for whole sequence, not only for subsequence.
\end{proof}
\begin{remark}
Note that when $f$ does not depend on the derivative it suffice to obtain
inequality \eqref{nier1}, compare with Lemma 9.3 from \cite{kelly} p. 342.,
which reads as follows. If $f:=f\left( t,x\left( t\right) \right) $ and if
there exists a positive constant $N$, such that 
\begin{equation*}
|x^{n}(k)|\leq N,
\end{equation*}%
then there exists a positive constant $Q$, such that 
\begin{equation*}
n|\Delta x^{n}(k-1)|\leq Q.
\end{equation*}%
Hence we can repeat reasoning from proof of Theorem \ref{main_theorem}.
\end{remark}
\begin{remark}
We observe that in case when $f:=f\left( t,x\left( t\right)\right)$ our main result provides some improvement over its
counterpart from \cite{non-spurious} since we also obtain some information on the convergence of derivatives.
\end{remark}
Concerning the examples of nonlinear terms any function $f$ nondecreasing in 
$x$ and bounded in $\dot{x}$ is of order. See for example
\begin{enumerate}
\item[a)] $f\left( t,\dot{x},x\right) =g\left( t\right) \exp \left(
x-t^{2}\right) \left\vert \arctan \left( \dot{x}\right) \right\vert $,
\item[b)] $f\left( t,\dot{x},x\right) =g\left( t\right) \arctan \left(
x\right) \left\vert \arctan \left( \dot{x}\right) \right\vert $,
\item[c)] $f\left( t,\dot{x},x\right) =g\left( t\right) x^{3}+\exp \left(
x-t^{2}\right) \left\vert \arctan \left( \dot{x}\right) \right\vert $,
\end{enumerate}
where $g:\left[ 0,1\right] \rightarrow \mathbb{R}$ is a continuous and
non-negative function. Recall the Sobolev's inequality $\max_{t\in \left[ 0,1%
\right] }\left\vert x\left( t\right) \right\vert \leq \left(\int_{0}^{1}\dot{%
x}^{2}\left( t\right) dt\right)^\frac{1}{2}$. All above functions satisfy 
\textit{\textbf{P2}} due to standard monotonicity. Assumption \textit{%
\textbf{P1}} is satisfied since in each case we can calculate for a fixed $%
r>0$ a function $f_{r}$. Indeed, by Sobolev's inequality when $\left\Vert
x\right\Vert \leq r$, we see that $\left\vert x\left( t\right) \right\vert
\leq r$ for all $t\in \left[ 0,1\right] $. Then we calculate as follows. Let 
$g_{1}\left( t\right) =g\left( t\right) \exp \left( -t^{2}\right) $ and $%
c_{1}=\max_{\alpha \in \left[ -r,r\right] }\exp \left( \alpha \right) $.
Then, for $\left\Vert x\right\Vert \leq r$, we have 
\begin{equation*}
g\left( t\right) \exp \left( x-t^{2}\right) \left\vert \arctan \left( \dot{x}%
\right) \right\vert \leq \frac{c_{1}\pi }{2}g_{1}\left( t\right)
\end{equation*}%
for all $t\in \left[ 0,1\right] $. The other examples can be demonstrated
likewise.
\subsection{Additional result}
In this section we use the already developed technique to consider the so
called continuous dependence on functional parameters. The idea of the
continuous dependence on parameters is as follows. Let us consider together
with \eqref{problem} a family of boundary value problems 
\begin{equation}  \label{family_n}
\left\{ 
\begin{array}{l}
\ddot{x}\left( t\right) =f\left( t,\dot{x}\left( t\right) ,x\left( t\right)
\right) -h_{n}\left( t\right) , \\ 
x\left( 0\right) =x\left( 1\right) =0,%
\end{array}%
\right.
\end{equation}%
where $f:\left[ 0,1\right] \times \mathbb{R}^{2}\rightarrow \mathbb{R}$ and $%
h_{n}:\left[ 0,1\right] \rightarrow \mathbb{R}$ are continuous functions.
Assume that for any $n$ there exists at least one solution $x_{n}$ to %
\eqref{family_n} and that the sequence $\left( h_{n}\right) _{n\in \mathbb{N}%
}$ is convergent to $h_{0}$ in some sense. The problem \eqref{problem}
depends continuously on a functional parameter $h_{n}$ if sequence $\left(
x_{n}\right) _{n\in \mathbb{N}}$ is convergent, possibly up to a
subsequence, to a solution $x_{0}$ of \eqref{problem} corresponding to $%
h=h_{0}$. To prove continuous dependence on parameter in our problem %
\eqref{problem} we need the following lemma. \newpage
\begin{lemma}
\label{ogr_c} Let $f:\left[ 0,1\right] \times \mathbb{R}^{2}\rightarrow 
\mathbb{R}$ be continuous. By $x$ we denote solution of problem %
\eqref{problem} for functional parameter $h\in H_{0}^{1}\left( 0,1\right) $.
Then we have the inequality 
\begin{equation*}
\left\Vert x\right\Vert \leq \left\Vert h\right\Vert\text{.}
\end{equation*}
\end{lemma}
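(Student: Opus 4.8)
The plan is to mimic the estimate \eqref{ogr} from the discrete case, now working directly in $H_0^1(0,1)$ with the operator $K$ introduced for the continuous problem. By Theorem~\ref{existc}, the solution $x$ of \eqref{problem} for the parameter $h$ exists, is unique, and satisfies $Kx = h$ in the sense that $\langle Kx, y\rangle = \int_0^1 y(t)h(t)\,dt$ for all $y \in H_0^1(0,1)$; here I am reading the right-hand side $h$ as the element of $H_0^1(0,1)$ representing the functional $y \mapsto \int_0^1 y h$, which is legitimate since $h \in H_0^1(0,1)$.

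First I would observe that $x = \theta$ (the zero function) solves \eqref{problem} when $h = \theta$: indeed $\langle K\theta, y\rangle = \int_0^1 \dot\theta \dot y + \int_0^1 y\, f(t,0,0)\,dt = 0$ because $f(t,0,0) = 0$, so by the uniqueness part of Theorem~\ref{existc} the solution for $h = \theta$ is exactly $\theta$. Next I would apply Corollary~\ref{drab} to the strongly monotone operator $K$ with constant $c = 1$ (established in the monotonicity part of the proof of Theorem~\ref{existc}), taking $u_1 = x$ with $Kx = h$ and $u_2 = \theta$ with $K\theta = \theta$ (zero). The Lipschitz estimate for $T^{-1}$ from Corollary~\ref{drab} then yields
\begin{equation*}
\left\Vert x - \theta \right\Vert \leq \frac{1}{c}\left\Vert h - \theta \right\Vert = \left\Vert h \right\Vert,
\end{equation*}
which is precisely the claimed inequality $\left\Vert x\right\Vert \leq \left\Vert h\right\Vert$.

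The only genuinely delicate point is the identification of the right-hand side of \eqref{weak} as an element of $H_0^1(0,1)$ whose norm equals $\left\Vert h\right\Vert$, i.e.\ that the operator equation $Kx = h$ holds with $h$ on the right interpreted in the Hilbert space $H_0^1(0,1)$ rather than as a function. Since $h \in H_0^1(0,1)$ is assumed, the Riesz representation of the functional $y \mapsto \int_0^1 y(t) h(t)\,dt$ is some element $\tilde h \in H_0^1(0,1)$, and in general $\left\Vert \tilde h\right\Vert \le \left\Vert h\right\Vert$ need not be an equality. One clean way around this is to note that the bound really only needs $K x = \tilde h$ together with $\left\Vert \tilde h \right\Vert \le \left\Vert h \right\Vert$; the latter follows from testing the functional against $y = x$ itself and the Poincaré/Sobolev inequality $\max_t|x(t)| \le \left\Vert x\right\Vert$, or alternatively from $\langle \tilde h, \tilde h\rangle = \int_0^1 \tilde h\, h \le \left\Vert \tilde h\right\Vert_{L^2}\left\Vert h\right\Vert_{L^2} \le \left\Vert \tilde h\right\Vert\left\Vert h\right\Vert$ after using $\left\Vert \cdot\right\Vert_{L^2} \le \left\Vert\cdot\right\Vert$ on $H_0^1(0,1)$, giving $\left\Vert\tilde h\right\Vert \le \left\Vert h\right\Vert$. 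Combining this with the Lipschitz bound from Corollary~\ref{drab} applied to $Kx = \tilde h$, $K\theta = \theta$ closes the argument.
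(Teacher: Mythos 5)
Your proof is correct and takes essentially the same route as the paper: show that $\theta$ is the unique solution for $h=\theta$ using $f\left( t,0,0\right) =0$ together with the uniqueness from Theorem \ref{existc}, and then apply the Lipschitz estimate of Corollary \ref{drab} with $c=1$ to get $\left\Vert x\right\Vert =\left\Vert x-x_{\theta }\right\Vert \leq \left\Vert h\right\Vert $. Your additional observation --- that the right-hand side of \eqref{weak} must first be replaced by its Riesz representative $\tilde{h}$ in $H_{0}^{1}\left( 0,1\right) $, for which only $\left\Vert \tilde{h}\right\Vert \leq \left\Vert h\right\Vert $ holds rather than equality --- is a genuine point of rigor that the paper's own proof passes over silently, and your argument for $\left\Vert \tilde{h}\right\Vert \leq \left\Vert h\right\Vert $ via $\left\Vert \cdot \right\Vert _{L^{2}}\leq \left\Vert \cdot \right\Vert $ correctly closes that gap.
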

\begin{proof}
Fix $h\in H_{0}^{1}\left(0,1\right) $.\textbf{\ }Corollary %
\ref{drab}, in which we take $h_{1}=h$, $h_{2}=\theta $ and $c=1$. We have 
\begin{equation*}
\left\Vert x\right\Vert -\left\Vert x_{\theta }\right\Vert \leq \left\Vert
x-x_{\theta }\right\Vert \leq \left\Vert h\right\Vert \text{.}
\end{equation*}%
So 
\begin{equation*}
\left\Vert x\right\Vert \leq \left\Vert h\right\Vert +\left\Vert x_{\theta
}\right\Vert\text{.}
\end{equation*}%
Moreover, for $y\in H_{0}^{1}\left( 0,1\right) $ and $h=\theta $, we get
from \eqref{family_n} 
\begin{equation*}
\int_{0}^{1}{\dot{x}\left(t\right)\dot{y}\left(t\right)dt}%
+\int_{0}^{1}{y\left(t\right)f\left(t, \dot{x}\left(t\right), x\left(t\right)\right)dt}=0.
\end{equation*}
Using $f\left(t,0,0\right) =0$ we see that $x=\theta $ is a solution for $%
h=\theta $. Additionally we have uniqueness of solution, so $x_{\theta
}=\theta .$ Hence, we have 
\begin{equation*}
\left\Vert x\right\Vert \leq \left\Vert h\right\Vert \text{.}
\end{equation*}%
\end{proof}
\begin{theorem}
Assume condition \textit{\textbf{P2}} and that it holds for $t\in \left[ 0,1%
\right] $ 
\begin{equation}  \label{convergence}
f\left(t, \dot{x}\left( t\right), x\left(t\right)\right) =f_{1}\left( t,
x\left(t\right)\right) + \dot{x}\left( t\right) g\left( t\right)\text{,}
\end{equation}%
where $f_{1}:[0,1]\times \mathbb{R}\rightarrow \mathbb{R}$ is continuous and 
$g\in L^{2}\left( 0,1\right) $. Let $\left( h_{n}\right) _{n\in \mathbb{N}%
}\subset H_{0}^{1}\left( 0,1\right) $ be weakly convergent to $h_{0}$. Then
for every $h_{n}$, where $n=0, 1, 2, \ldots$, there exists exactly one solution $%
x_{n}$. Moreover there exists subsequence $\left( x_{n_{k}}\right) _{k\in 
\mathbb{N}}$, which is weakly convergent to $x_{0}$ and $x_{0}$ is solution
for $h_{0}$.
\end{theorem}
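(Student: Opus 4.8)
The plan is to mirror, step by step, the operator machinery already set up for the continuous problem and then finish with a weak-compactness argument; the only genuinely new point is that \eqref{convergence} makes the troublesome derivative term \emph{linear} in $\dot x$, which is precisely what lets one pass to a weak limit.

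First I would re-establish the three structural properties of the operator $K$ of \eqref{problem}, now under \textbf{P2} and \eqref{convergence} in place of \textbf{P1}. Well-definedness: for $x,y\in H_0^1(0,1)$ the integral $\int_0^1 y(t)f_1(t,x(t))\,dt$ is finite because $x,y$ are continuous and $f_1$ is continuous, while $\big|\int_0^1 y\,\dot x\,g\,dt\big|\le\|y\|_{C([0,1])}\|\dot x\|_{L^2(0,1)}\|g\|_{L^2(0,1)}<\infty$, using $\|y\|_{C([0,1])}\le\|y\|$. Strong monotonicity with $c=1$ is literally the monotonicity part of the proof of Theorem~\ref{existc} and uses only \textbf{P2}. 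Continuity: if $x_m\to x$ in $H_0^1(0,1)$ then $x_m\to x$ uniformly and $\dot x_m\to\dot x$ in $L^2(0,1)$; writing $f(t,\dot x_m,x_m)-f(t,\dot x,x)=\big(f_1(t,x_m)-f_1(t,x)\big)+\big(\dot x_m-\dot x\big)g$, one estimates, for $\|y\|\le 1$, the three contributions by $\|\dot x_m-\dot x\|_{L^2(0,1)}$, by $\|f_1(\cdot,x_m)-f_1(\cdot,x)\|_{L^1(0,1)}$ (uniform convergence plus continuity of $f_1$), and by $\|\dot x_m-\dot x\|_{L^2(0,1)}\|g\|_{L^2(0,1)}$, all tending to $0$ uniformly in $y$. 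By Corollary~\ref{drab} this yields, for each $n\in\{0,1,2,\dots\}$, a unique solution $x_n$ of \eqref{family_n}, and, exactly as in Lemma~\ref{ogr_c}, $\|x_n\|\le\|h_n\|$.

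Being weakly convergent, $(h_n)$ is bounded, so $(x_n)$ is bounded; since $H_0^1(0,1)$ is a Hilbert space, some subsequence satisfies $x_{n_k}\rightharpoonup x_0$ in $H_0^1(0,1)$. As $H_0^1(0,1)$ embeds compactly into $C([0,1])$ and into $L^2(0,1)$, we also get $x_{n_k}\to x_0$ uniformly on $[0,1]$ and $h_{n_k}\to h_0$ in $L^2(0,1)$. I would then take the weak identity for $x_{n_k}$,
\[
\int_0^1\dot x_{n_k}\dot y\,dt+\int_0^1 y\,f_1(t,x_{n_k})\,dt+\int_0^1 y\,g\,\dot x_{n_k}\,dt=\int_0^1 y\,h_{n_k}\,dt,
\]
valid for all $y\in H_0^1(0,1)$, and let $k\to\infty$: the first term tends to $\int_0^1\dot x_0\dot y\,dt$ by weak convergence, the second to $\int_0^1 y\,f_1(t,x_0)\,dt$ by uniform convergence of $x_{n_k}$ and uniform continuity of $f_1$ on compacta, and the right-hand side to $\int_0^1 y\,h_0\,dt$ by $L^2$-convergence of $h_{n_k}$. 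For the third term I would use that $x\mapsto\dot x$ is a linear isometry of $H_0^1(0,1)$ onto a closed subspace of $L^2(0,1)$, so $x_{n_k}\rightharpoonup x_0$ forces $\dot x_{n_k}\rightharpoonup\dot x_0$ in $L^2(0,1)$; since $yg\in L^2(0,1)$ (because $y$ is bounded and $g\in L^2(0,1)$), this gives $\int_0^1 y\,g\,\dot x_{n_k}\,dt\to\int_0^1 y\,g\,\dot x_0\,dt$. Hence $x_0\in H_0^1(0,1)$ is a weak solution of \eqref{problem} for $h=h_0$, and by the uniqueness from the first step it is \emph{the} solution.

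The main obstacle is exactly the term $\int_0^1 y\,g\,\dot x_{n_k}\,dt$: carrying a $\dot x$-dependence across a weak limit works only because \eqref{convergence} makes that dependence linear, so the term is weakly continuous when tested against $yg\in L^2(0,1)$; a genuinely nonlinear dependence on $\dot x$ would destroy this. The supporting point — transferring weak convergence in $H_0^1(0,1)$ to weak $L^2$-convergence of the derivatives through the isometry $x\mapsto\dot x$ — is routine but must be spelled out.
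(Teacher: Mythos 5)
Your proposal is correct and follows essentially the same route as the paper: uniqueness via the strongly monotone operator $K$ and Corollary \ref{drab}, the a priori bound $\left\Vert x_{n}\right\Vert \leq \left\Vert h_{n}\right\Vert$ from Lemma \ref{ogr_c}, extraction of a weakly convergent subsequence, and passage to the limit term by term in the weak formulation. You are in fact more explicit than the paper at the one delicate step, namely justifying $\int_{0}^{1}y\,g\,\dot{x}_{n_{k}}\,dt\rightarrow \int_{0}^{1}y\,g\,\dot{x}_{0}\,dt$ via weak $L^{2}$-convergence of the derivatives, which the paper subsumes under ``the definition of weak convergence.''
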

\begin{proof}
Let $\left( x_{n}\right) _{n\in \mathbb{N}_{0}}$ be a
sequence of solutions to \eqref{family_n} where each $x_{n}$ corresponds to $h_{n}$. Such
a sequence exists by Theorem \ref{existc}. Note that we do not require
condition \textit{\textbf{P1}} to be satisfied but like in \eqref{weak}, we
have that $\int_{0}^{1}{\dot{x}_{n}\left( t\right) \dot{y}\left( t\right) dt}
$, $\int_{0}^{1}{y\left( t\right) h_{n}\left( t\right) dt}$ are finite for
any $y\in H_{0}^{1}\left( 0,1\right)$. Additionally $y$ is continuous, $x_{n}$, $g\left( t\right) \in
L^{2}\left( 0,1\right) $, so $\int_{0}^{1}{y\left( t\right) \dot{x}%
_{n}\left( t\right) g\left( t\right) }$ is also finite. Finally $x_{n}$ is
continuous and $f_{1}$ is continuous, hence $\int_{0}^{1}{y\left( t\right)
f_{1}\left( t,x_{n}\left( t\right) \right) dt}$ is finite too.
Now we need weak convergence of a subsequence of $\left( x_{n}\right) _{n\in 
\mathbb{N}}$, so we must show that $\left( x_{n}\right) _{n\in \mathbb{N}}$
is bounded. From Lemma \ref{ogr_c} for each $n$, we have 
\begin{equation*}
\left\Vert x_{n}\right\Vert \leq \left\Vert h_{n}\right\Vert \text{.}
\end{equation*}%
Since $\left( h_{n}\right) _{n\in \mathbb{N}}$ is weakly convergent, there
exists positive constant $c$ such that $\left\Vert h_{n}\right\Vert < c$ for all $n\in 
\mathbb{N}$. We finally have that 
\begin{equation*}
\forall _{n\in \mathbb{N}}\left\Vert x_{n}\right\Vert \leq c.
\end{equation*}%
Therefore, we get existence of a weakly convergent subsequence $\left(
x_{n_{k}}\right) _{k\in \mathbb{N}}$. We denote the weak limit of $\left(
x_{n_{k}}\right) _{k\in \mathbb{N}}$ by $x_{0}$. Hence for any  $y\in H_{0}^{1}\left( 0,1\right)$ we have  
\begin{equation*}
\int_{0}^{1}{\dot{x}_{n_{k}}\left( t\right) \dot{y}\left( t\right) dt}%
+\int_{0}^{1}{y\left( t\right) f_{1}\left( t,x_{n_{k}}\left( t\right)
\right) dt}+\int_{0}^{1}{y\left( t\right) \dot{x}_{n_{k}}\left( t\right)
g\left( t\right) }=\int_{0}^{1}{y\left( t\right) h_{n_{k}}\left( t\right) dt}%
\text{.}
\end{equation*}%
Applying the definition of weak convergence and the Lebesgue's Dominated
Convergence Theorem to $\int_{0}^{1}{y\left( t\right) f_{1}\left(
t,x_{n_{k}}\left( t\right) \right) dt}$ and letting $n_{k}\rightarrow
+\infty $ we get 
\begin{equation*}
\int_{0}^{1}{\dot{x}_{0}\left( t\right) \dot{y}\left( t\right) dt}%
+\int_{0}^{1}{y\left( t\right) f_{1}\left( t,x_{0}\left( t\right) \right) dt}%
+\int_{0}^{1}{y\left( t\right) \dot{x}_{0}\left( t\right) g\left( t\right) }%
=\int_{0}^{1}{y\left( t\right) h_{0}\left( t\right) dt}\text{.}
\end{equation*}%
Therefore $x_{0}$ is a solution of \eqref{problem} for $h_{0}$. \end{proof}
We present examples of functions satisfying the above,
\begin{enumerate}
\item[a)] $f\left( t,\dot{x},x\right) =g\left( t\right) \exp \left(
x-t^{2}\right) +g_{1}\left( t\right) \dot{x}$,
\item[b)] $f\left( t,\dot{x},x\right) =g\left( t\right) \arctan \left(
x\right) +g_{1}\left( t\right) \dot{x}$,
\item[c)] $f\left( t,\dot{x},x\right) =g\left( t\right) x^{3}+\exp \left(
x-t^{2}\right) +g_{1}\left( t\right) \dot{x}$,
\end{enumerate}
where $g:\left[ 0,1\right] \rightarrow \mathbb{R}$ is a continuous and
non-negative function and $g_{1}\in L^{2}\left( 0,1\right) $.

\newpage Filip Pietrusiak\newline
Lodz University of Technology\newline
Institute of Mathematics\newline
Wolczanska 215, 90-234 Lodz, Poland\newline
filip.pietrusiak@gmail.com

\end{document}